\newcommand{\R}{\mathbb{R}}
\newcommand{\N}{\mathbb{N}}
\newcommand{\Z}{\mathbb{Z}}
\newcommand{\mmp}{\mathbb{P}}
\newcommand{\eqdistr}{\stackrel{{\rm d}}{=}}
\newcommand{\todistr}{\stackrel{{\rm d}}{\to}}
\newcommand{\todistrj}{\stackrel{J_1}{\Longrightarrow}}
\newcommand{\todistrm}{\stackrel{M_1}{\Longrightarrow}}
\DeclareMathOperator{\1}{\mathbbm{1}}
\theoremstyle{plain}
\newtheorem{theorem}{Theorem}[section]
\newtheorem{lemma}[theorem]{Lemma}
\newtheorem{corollary}[theorem]{Corollary}
\theoremstyle{definition}
\newtheorem{example}[theorem]{Example}
\theoremstyle{remark}
\newtheorem{remark}[theorem]{Remark}
\begin{document}

\title{Renewal shot noise processes in the case of slowly varying tails}

\author{Zakhar Kabluchko}
\address{Zakhar Kabluchko, Institut f\"ur Mathematische Statistik, Westf\"{a}lische Wilhelms-Uni\-ver\-sit\"{a}t M\"{u}nster, Orl\'eans--Ring 10, 48149 M\"unster, Germany}
\email{{zakhar.kabluchko@uni-muenster.de}}

\author{Alexander Marynych}
\address{Alexander Marynych, Faculty of Cybernetics, Taras Shevchenko National University of Kyiv, 01601 Kyiv, Ukraine}
\email{{marynych@unicyb.kiev.ua}}

\begin{abstract}
We investigate weak convergence of renewal shot noise processes in the case of slowly varying tails of the inter-shot times.
We show that these processes, after an appropriate non-linear scaling, converge in the sense of finite-dimensional distributions to an inverse extremal process.
\end{abstract}

\keywords{Extremal process, random process with immigration, renewal theory, shot noise process}

\subjclass[2010]{Primary: 60F05;  Secondary: 	60K05}

\maketitle

\section{Introduction and main result}
Let $\xi_1,\xi_2,\ldots$ be independent copies of a positive random variable $\xi$. Define the random walk $(S_n)_{n\in\N_0}$ by
$$
S_0:=0,\quad S_n:=\xi_1+\ldots+\xi_n,\quad n\in\N,
$$
and let $(\nu(t))_{t\geq 0}$ be the corresponding first-passage time process for $(S_n)_{n\in\N_0}$:
$$
\nu(t):=\inf\{k\in\N:S_k > t\},\quad t\geq 0.
$$
For a locally bounded and measurable function $h:[0,\infty)\to\R$ define the {\it renewal shot noise} process $(Y(t))_{t\geq 0}$ as follows:
$$
Y(t):=\sum_{k=0}^{\infty}h(t-S_k)\1_{\{S_k\leq t\}}=\sum_{k=0}^{\nu(t)-1}h(t-S_k),\quad t\geq 0.
$$
As is readily seen from the definition, the renewal shot noise processes may serve as models of diverse phenomena of a cumulative nature, in which
an overall effect in some system (say, current in a vacuum tube) is induced by the individual effects of a constantly arriving stream (say, electrons) of particles, claims, customers etc. These processes were used in various fields of applied science and we refer the reader to a recent book \cite{IksanovBook} for a rather complete list of possible applications.

The asymptotic behavior of renewal shot noise processes has attracted a considerable attention in the past years. Their weak convergence
was analyzed in \cite{Iksanov:2013,Iksanov+Kabluchko+Marynych:2016,Iksanov+Marynych+Meiners:2014,Iksanov+Marynych+Meiners:2016-1,Iksanov+Marynych+Meiners:2016-2}, see also \cite{Kluppelberg+Mikosch:1995,Kluppelberg+Mikosch+Scharf:2003,Lane:1984} for the case of Poisson shot noise, and to date there is more or less complete picture of their limiting behavior at least in the sense of convergence of finite-dimensional distributions. The book \cite{IksanovBook} provides a comprehensive survey of the topic.

In this paper we focus on a special class of renewal shot noise processes that has not been studied before, namely, we are interested in the case when the distribution of the  inter-shot times $\xi_1,\xi_2,\ldots$ has a slowly varying tail:
\begin{equation}\label{xi_slow_variation}
\mmp\{\xi>x\}\sim \frac{1}{L(x)},\quad x\to+\infty
\end{equation}
for some $L$ slowly varying at infinity. Without loss of generality we assume that the function $L$ in \eqref{xi_slow_variation} is chosen as strictly increasing, continuous on $[0,\infty)$ with $L(0)=0$, see Lemma \ref{L_increasing_continuous} below. Under these assumptions $L$ possesses the unique inverse function $L^{\leftarrow}$ such that $L^{\leftarrow}(L(t))=L(L^{\leftarrow}(t))=t$ for all $t\geq 0$.

Before stating our main result let us briefly recall some known facts about the behavior of random walks with slowly varying tails of the steps. From the classical result by Darling \cite{Darling:1952}, it is known that every linear normalization $a_n S_n+b_n$ leads to a degenerate limit and in order to obtain a proper limiting behavior one has to consider a non-linear scaling. More precisely, the following convergence in distribution holds
\begin{equation}\label{one_dimensional_rw}
n^{-1}L(S_n)\todistr \mathcal{X},\quad n\to\infty,
\end{equation}
where $\mathcal{X}$ has a standard Fr\'{e}chet distribution, i.e. $\mmp\{\mathcal{X}\leq x\}=e^{-1/x}$, $x>0$. The functional analogue of \eqref{one_dimensional_rw} was established
in \cite{Kasahara:1986} and reads
\begin{equation}\label{flt_rw}
n^{-1}L(S_{[n\cdot ]})\todistrj m(\cdot),\quad n\to\infty,
\end{equation}
on the Skorohod space $D[0,\infty)$. Here and hereafter the notation $\todistrj$ ($\todistrm$) is used to denote weak convergence of random elements on $D[0,\infty)$ endowed with the $J_1$-topology (the $M_1$-topology)\footnote{For the definition of the Skorohod space $D[0,\infty)$, as well as different topologies on it, we refer to the treatise \cite{Whitt:2002}.}. The stochastic process $(m(u))_{u\geq 0}$ is the extremal process of a Poisson process $\mathcal{P}$ on $[0,\infty)\times (0,\infty]$ with intensity $dt \times y^{-2} dy$. That is, if $\mathcal{P}:=\sum_{k}\delta_{(t_k,y_k)}$ where $\delta_x$ denotes the Dirac measure at point $x\in[0,\infty)\times (0,\infty]$, then
$$
m(u):=\max_{k: t_k\leq u}\,y_k,\quad u\geq 0.
$$
Let $(m^{\leftarrow}(u))_{u\geq 0}$ be a generalized inverse of $(m(u))_{u\geq 0}$ defined by
$$
m^{\leftarrow}(u):=\inf\{y\geq 0:m(y)>u\},\quad u\geq 0.
$$
With this notation at hand we can formulate our main result as follows.
\begin{theorem}\label{thm:main}
Let $h:[0,\infty)\to[0,\infty)$ be a locally bounded measurable function such that $h\circ L^{\leftarrow}$ is regularly varying at infinity with index $\alpha\in\R$. If \eqref{xi_slow_variation} holds, then for every $n\in\N$ and $0<u_1<\ldots<u_n<\infty$, we have
\begin{equation}\label{eq:main_theorem}
\left(\frac{Y(L^{\leftarrow}(tu_1))}{th(L^{\leftarrow}(t))},\ldots,\frac{Y(L^{\leftarrow}(tu_n))}{th(L^{\leftarrow}(t))}\right)\todistr (u_1^{\alpha}m^{\leftarrow}(u_1),\ldots,u_n^{\alpha}m^{\leftarrow}(u_n)),\quad t\to\infty.
\end{equation}
\end{theorem}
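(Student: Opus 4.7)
The approach is to exploit the one-big-jump principle for slowly varying tails: the random walk $(S_k)$ is dominated by its largest increment, so for all $k<\nu(L^{\leftarrow}(tu))$ the quantity $L^{\leftarrow}(tu)-S_k$ stays close to $L^{\leftarrow}(tu)$. Heuristically this yields the approximation
\begin{equation*}
Y(L^{\leftarrow}(tu)) \;\approx\; \nu(L^{\leftarrow}(tu))\,h(L^{\leftarrow}(tu)),
\end{equation*}
from which \eqref{eq:main_theorem} follows by combining the convergence $\nu(L^{\leftarrow}(tu))/t \todistr m^{\leftarrow}(u)$ with the regular-variation identity $h(L^{\leftarrow}(tu))/h(L^{\leftarrow}(t))=(h\circ L^{\leftarrow})(tu)/(h\circ L^{\leftarrow})(t)\to u^{\alpha}$, whose product gives the scaling factor on the right-hand side of \eqref{eq:main_theorem}.

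The joint convergence $(\nu(L^{\leftarrow}(tu_j))/t)_{j=1}^{n}\todistr (m^{\leftarrow}(u_j))_{j=1}^{n}$ I would derive by applying the right-continuous-inverse functional to Kasahara's FLT \eqref{flt_rw}. Since $\nu(L^{\leftarrow}(tu))/t$ is, up to an $O(1/t)$ correction, the inverse at level $u$ of the non-decreasing c\`adl\`ag process $s\mapsto L(S_{\lfloor ts\rfloor})/t$, the continuous mapping theorem applies simultaneously at $u_1,\ldots,u_n$, each of which is almost surely neither a jump height nor a value on a flat piece of the extremal process $m$.

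The main technical step is the uniform approximation $h(L^{\leftarrow}(tu)-S_k)\sim h(L^{\leftarrow}(tu))$ for all $0\leq k<\nu(L^{\leftarrow}(tu))$. The key input is that $S_{\nu(L^{\leftarrow}(tu))-1}/L^{\leftarrow}(tu)\to 0$ in probability: continuous mapping applied to \eqref{flt_rw} gives $L(S_{\nu(L^{\leftarrow}(tu))-1})/t \todistr m(m^{\leftarrow}(u)-)$, and by construction $m(m^{\leftarrow}(u)-)<u$ almost surely, while the rapid variation of $L^{\leftarrow}$ (the inverse of the slowly varying $L$) forces $L^{\leftarrow}(tv)/L^{\leftarrow}(tu)\to 0$ for every $v<u$. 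Given this, on the high-probability event $\{\max_{k<\nu(L^{\leftarrow}(tu))} S_k \leq (1-c)L^{\leftarrow}(tu)\}$ with fixed $c\in(0,1)$, the uniform convergence theorems for the slowly varying $L$ and the regularly varying $h\circ L^{\leftarrow}$ (Potter bounds) jointly yield
\begin{equation*}
\sup_{0\leq k<\nu(L^{\leftarrow}(tu))}\left|\frac{h(L^{\leftarrow}(tu)-S_k)}{h(L^{\leftarrow}(tu))}-1\right|\longrightarrow 0\quad\text{in probability.}
\end{equation*}

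Multiplying through by the number $\nu(L^{\leftarrow}(tu))$ of summands delivers the approximation, and then Slutsky's theorem combined with the joint first-passage convergence and the deterministic limits $h(L^{\leftarrow}(tu_j))/h(L^{\leftarrow}(t))\to u_j^{\alpha}$ yields \eqref{eq:main_theorem}. The principal obstacle is precisely the uniform comparison above: it demands that \emph{every} summand in $Y$ — not merely the typical ones — be asymptotically indistinguishable from $h(L^{\leftarrow}(tu))$, which rests on the one-big-jump control $S_{\nu(L^{\leftarrow}(tu))-1}=o_{\mmp}(L^{\leftarrow}(tu))$ combined with Potter bounds for $h\circ L^{\leftarrow}$.
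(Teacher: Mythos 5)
Your proposal is correct and takes a genuinely different route from the paper's. The paper works with the integral representation $Y(L^{\leftarrow}(ut))=\int_{[0,u]}h(L^{\leftarrow}(ut)-L^{\leftarrow}(zt))\,{\rm d}_z\nu(L^{\leftarrow}(zt))$, truncates at $z=u-\varepsilon$, treats the main piece via a uniform-convergence lemma (Lemma \ref{uniformity_in_rv}, second bullet) and Theorem \ref{flt_rp}, and controls the boundary strip $z\in(u-\varepsilon,u]$ by showing $\lim_{\varepsilon\to0}\limsup_{t\to\infty}\mmp\{\nu(L^{\leftarrow}(ut))-\nu(L^{\leftarrow}((u-\varepsilon)t))>0\}=0$ using \eqref{eq:last_value_and_overshot_conv} and \eqref{eq:self_similarity}; the pieces are then glued together by a two-parameter convergence lemma (Billingsley). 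You instead approximate all the summands simultaneously on the single event $\{S_{\nu(L^{\leftarrow}(tu))-1}\leq(1-c)L^{\leftarrow}(tu)\}$, whose probability tends to $1$ for any fixed $c\in(0,1)$ because $L(S_{\nu(L^{\leftarrow}(tu))-1})/t\todistr m(m^{\leftarrow}(u)-)<u$ a.s.\ while $L((1-c)L^{\leftarrow}(tu))/t\to u$ by slow variation of $L$; on that event $Y(L^{\leftarrow}(tu))=\nu(L^{\leftarrow}(tu))\,h(L^{\leftarrow}(tu))(1+o_{\mmp}(1))$, and Slutsky plus the joint first-passage convergence finishes. This bypasses the $\varepsilon$-of-room step and the two-parameter limit entirely, which is a cleaner argument. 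One point you should make explicit (it is the first bullet of Lemma \ref{uniformity_in_rv}): the comparison $h(L^{\leftarrow}(tu)-S_k)/h(L^{\leftarrow}(tu))\to1$ uniformly over $S_k\leq(1-c)L^{\leftarrow}(tu)$ rests on the slow variation of $h$ \emph{itself} (which follows from regular variation of $h\circ L^{\leftarrow}$ together with slow variation of $L$), not on Potter bounds for $h\circ L^{\leftarrow}$, since the argument $L^{\leftarrow}(tu)-S_k$ is not of the form $L^{\leftarrow}(\cdot)$.
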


The properties of the extremal process $(m(u))_{u\geq 0}$ and its inverse $(m^{\leftarrow}(u))_{u\geq 0}$ are well-understood, see \cite{Dwass:1964}, \cite{Resnick:1974} and \cite{Resnick:2008}. In particular, the marginals of $m^{\leftarrow}$ are exponential
\begin{equation}\label{eq:marginals}
\mmp\{m^{\leftarrow}(u)>v\}=e^{-v/u},\quad u,v\geq 0.
\end{equation}
and $(m^{\leftarrow}(u))_{u\geq 0}$ has independent (but not stationary) increments. The two-dimensional distributions of $(m^{\leftarrow}(u))_{u\geq 0}$ as well as the distribution of its increments are also known explicitly, see Propositions 2.2 and 2.3 in \cite{Meerschaer+Scheffler:2005}. Finally, let us note that both $(m(u))_{u\geq 0}$ and its inverse are nondecreasing for $u\geq 0$ and almost surely continuous at every fixed $u\geq 0$, see Proposition 4.7 in \cite{Resnick:2008}.

The rest of the paper is organized as follows. In Section \ref{sec:renewal_theory} we present some relevant results from the renewal theory for random walks with slowly varying tails. Even though, they follow more or less directly from the basic convergence \eqref{flt_rw}, we have not been able to locate them in the literature in the desired generality. The main result is proved in Section \ref{sec:proof}. An auxiliary lemma is given in the Appendix.

\section{Renewal theory for random walks with slowly varying tails}\label{sec:renewal_theory}
Using the well-known continuity of the first-passage time mapping in the $M_1$-topology, see \cite{Whitt:1971}, we obtain from \eqref{flt_rw} the following functional limit theorem for the process $(\nu(t))_{t\geq 0}$.
\begin{theorem}\label{flt_rp}
Under the assumption \eqref{xi_slow_variation} we have
\begin{equation*}
t^{-1}\nu(L^{\leftarrow}(t\cdot ))\todistrm m^{\leftarrow}(\cdot),\quad t\to\infty,
\end{equation*}
on $D[0,\infty)$.
\end{theorem}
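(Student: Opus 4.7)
My plan is to observe that, after rescaling, the process $t^{-1}\nu(L^{\leftarrow}(t\cdot))$ is precisely the generalized inverse of the rescaled random walk on the left-hand side of \eqref{flt_rw}, and then to apply Whitt's continuity theorem for the first-passage (inverse) functional.

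First, I would introduce $V_t(v):=t^{-1}L(S_{[tv]})$ for $v\geq 0$, so that \eqref{flt_rw} reads $V_t\todistrj m$ on $D[0,\infty)$. Since $L$ is strictly increasing with $L(0)=0$, the equivalence $\{S_k>L^{\leftarrow}(tu)\}=\{L(S_k)>tu\}$ holds, and a short deterministic computation exploiting the monotonicity of $k\mapsto L(S_k)$ together with that of $v\mapsto[tv]$ delivers the identity
$$
V_t^{\leftarrow}(u):=\inf\{v\geq 0:V_t(v)>u\}=t^{-1}\nu(L^{\leftarrow}(tu)),\qquad u\geq 0.
$$
Thus the object of interest in Theorem \ref{flt_rp} is nothing but the functional inverse $V_t^{\leftarrow}$.

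Second, I would invoke the classical result that the first-passage functional $x\mapsto x^{\leftarrow}$ is continuous from $(D[0,\infty),J_1)$ to $(D[0,\infty),M_1)$ at every nondecreasing unbounded path with distinct jump values, as established by Whitt in \cite{Whitt:1971} (see also Chapter 13 of \cite{Whitt:2002}). The continuous mapping theorem, combined with $V_t\todistrj m$, would then immediately yield $V_t^{\leftarrow}\todistrm m^{\leftarrow}$, which is the content of Theorem \ref{flt_rp}.

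The step that will require the most care is verifying that the extremal process $m$ almost surely lies in the continuity set of the inverse functional. This reduces to standard properties of $m$: its sample paths are a.s. nondecreasing step functions with only finitely many jumps on any bounded interval, $m(u)\to+\infty$ as $u\to\infty$, and its jump values are a.s. distinct because the intensity measure $\dd t\times y^{-2}\dd y$ of $\mathcal{P}$ is atomless. None of these facts is difficult, but they have to be carefully matched to the precise hypotheses of the continuity theorem being invoked, and some minor attention must also be paid to the behavior at the boundary $u=0$.
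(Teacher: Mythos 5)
Your proposal is correct and follows exactly the route the paper takes: cite the functional limit theorem \eqref{flt_rw} for the rescaled random walk, observe that $t^{-1}\nu(L^{\leftarrow}(t\cdot))$ is its generalized inverse, and invoke Whitt's $M_1$-continuity of the inverse map. The paper compresses all of this into a single sentence, so your explicit verification of the identity $V_t^{\leftarrow}(u)=t^{-1}\nu(L^{\leftarrow}(tu))$ (which uses that $L$ is strictly increasing and that $S_n$ is strictly increasing in $n$) and of the fact that $m$ a.s.\ lies in the continuity set of the inverse map are useful elaborations rather than deviations.
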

\begin{remark}
The convergence of finite-dimensional distributions has been proved by direct calculations in \cite{Meerschaer+Scheffler:2005}, see Theorem 2.1 therein. As has already been mentioned, the process $(m^{\leftarrow}(u))_{u\geq 0}$ is continuous in probability and, for every $t>0$, the process $u\mapsto t^{-1}\nu(L^{\leftarrow}(tu))$ is almost surely nondecreasing, hence it is tempting to apply Theorem 3 in \cite{Bingham:1972} to deduce a strengthened version of Theorem \ref{flt_rp} in the $J_1$-topology. However, as has been noted in \cite{Yamazato:2009}, Bingham's assertion is incorrect. Indeed, as we show next, the $M_1$-convergence in Theorem \ref{flt_rp} cannot be replaced by the $J_1$-convergence, providing another counterexample to Bingham's Theorem 3. To show directly that there is no convergence in the $J_1$-topology, consider the
functional
$$
J(f(\cdot)):=\sup_{u\in[0,1]}|f(u)-f(u-)|,\quad f\in D[0,\infty).
$$
It is $J_1$-continuous on the set of all $f\in D[0,\infty)$ having no jump at $1$ (see Theorem 4.5.5 in \cite{Whitt:2002}), which is a zero set with respect to the law of  $(m^{\leftarrow}(u))_{u\geq 0}$. 
If there were convergence in the $J_1$-topology, the continuous mapping theorem would imply
$$
J(t^{-1}\nu(L^{\leftarrow}(t\cdot )))\todistr J(m^{\leftarrow}(\cdot)),\quad t\to\infty,
$$
but $J(t^{-1}\nu(L^{\leftarrow}(t\cdot )))\leq 1/t\to 0$, as $t\to\infty$, while $\mmp\{J(m^{\leftarrow}(\cdot))>\varepsilon\}>0$, for every $\varepsilon>0$, yielding the desired contradiction. Roughly speaking, the reason for the failure of the $J_1$-convergence is that the jumps of the limit process $(m^{\leftarrow}(u))_{u\geq 0}$ appear as limits of large numbers of small jumps of size $1/t$ in the pre-limit process $u\mapsto t^{-1}\nu(L^{\leftarrow}(tu))$.
\end{remark}

Using that $(m^{\leftarrow}(u))_{u\geq 0}$ is almost surely continuous at $u=1$ and \eqref{eq:marginals}, we recover the one-dimensional result, originally due to Teicher \cite{Teicher:1979} (see also Proposition 2.1 in \cite{Iksanov+Moehle:2008} for an alternative proof by the method of moments).
\begin{corollary}\label{con_rp}
Under the assumption \eqref{xi_slow_variation} we have
$$
\frac {\nu(t)}{L(t)}\todistr \mathcal{E}_1,\quad t\to\infty,
$$
where $\mathcal{E}_1$ has standard exponential distribution.
\end{corollary}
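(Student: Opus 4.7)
The plan is to deduce the one-dimensional statement directly from Theorem \ref{flt_rp} by specializing the functional convergence to the single time $u=1$. First I would invoke the general fact that $M_1$-convergence on $D[0,\infty)$ implies convergence of the one-dimensional marginals at every point where the limit process is almost surely continuous (see \cite{Whitt:2002}). As noted in the paragraph preceding this corollary, $(m^{\leftarrow}(u))_{u \geq 0}$ is almost surely continuous at every fixed $u \geq 0$, and in particular at $u=1$.

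Evaluating the functional convergence of Theorem \ref{flt_rp} at $u=1$ then yields
$$
\frac{\nu(L^{\leftarrow}(t))}{t} \todistr m^{\leftarrow}(1),\quad t\to\infty.
$$
By the marginal formula \eqref{eq:marginals}, $\mmp\{m^{\leftarrow}(1) > v\} = e^{-v}$ for $v \geq 0$, so $m^{\leftarrow}(1) \eqdistr \mathcal{E}_1$.

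Finally, I would perform the change of variable $s := L^{\leftarrow}(t)$, which is legitimate because $L^{\leftarrow}$ is continuous and strictly increasing with $L^{\leftarrow}(t) \to \infty$ as $t \to \infty$ (by the assumption on $L$ recalled after \eqref{xi_slow_variation}). Since $L(s) = t$, the display above becomes $\nu(s)/L(s) \todistr \mathcal{E}_1$ as $s \to \infty$, which is exactly the claim. There is no real obstacle here: all the work has been done in Theorem \ref{flt_rp}, and the only subtlety is verifying that the $M_1$-convergence can be projected to the time $u=1$, which is guaranteed by the a.s.\ continuity of $m^{\leftarrow}$ at that point.
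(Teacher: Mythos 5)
Your proof is correct and follows exactly the route the paper sketches: project the $M_1$-convergence of Theorem \ref{flt_rp} to the single time $u=1$, justified by the almost sure continuity of $m^{\leftarrow}$ there, identify the marginal law via \eqref{eq:marginals}, and substitute $s=L^{\leftarrow}(t)$. The paper compresses this into one sentence, but the reasoning is the same.
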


Other important quantities in the renewal theory are {\it the last-value} function $u\mapsto S_{\nu(u)-1}$ and {\it the overshoot} $u\mapsto S_{\nu(u)}-u$. Applying Theorem 13.6.4 in \cite{Whitt:2002} we obtain the following result: for every fixed $u>0$,
\begin{equation}\label{eq:last_value_and_overshot_conv}
\left(\frac{L(S_{\nu(L^{\leftarrow}(tu))-1})}{t},\frac{L(S_{\nu(L^{\leftarrow}(tu))})}{t}-u\right)\todistr \left(m(m^{\leftarrow}(u)-),m(m^{\leftarrow}(u))-u\right),
\end{equation}
as $t\to\infty$. Replacing $t$ in the relation above by $L(t)/u$ yields
\begin{equation}\label{eq:self_similarity}
\left(m(m^{\leftarrow}(u)-),m(m^{\leftarrow}(u))\right)\eqdistr u\left(m(m^{\leftarrow}(1)-),m(m^{\leftarrow}(1))\right),
\end{equation}
as well as
$$
\left(\frac{L(S_{\nu(t)-1})}{L(t)},\frac{L(S_{\nu(t)})}{L(t)}\right)\todistr \left(m(m^{\leftarrow}(1)-),m(m^{\leftarrow}(1))\right),\quad t\to\infty.
$$
Observe that $m(m^{\leftarrow}(1)-)<1$ is the last value of $m$ before reaching the level $1$, whereas $m(m^{\leftarrow}(1))>1$ is the first value of $m$ after reaching the level $1$. In order to calculate the distribution of the limit, note that for all $0<x_1<1$ and $x_2>1$,
\begin{eqnarray*}
&&\mmp\{m(m^{\leftarrow}(1)-)\leq x_1,m(m^{\leftarrow}(1))\leq x_2\}\\
&&\hspace{1cm}=\int_{[0,\,\infty)}\mmp\{m(s-)\leq x_1,m(s)\leq x_2,m^{\leftarrow}(1)\in{\rm d}s\}\\
&&\hspace{1cm}=\int_{[0,\,\infty)}\mmp\{\mathcal{P}([0,s)\times (x_1,+\infty))=0,\mathcal{P}([s,\,s+{\rm d}s]\times (1,x_2])\geq1\}\\
&&\hspace{1cm}=\int_0^{\infty}\exp\left\{-\int_0^s\int_{x_1}^{\infty}\frac{{\rm d}x}{x^2}{\rm d}t\right\}\int_1^{x_2}\frac{{\rm d}x}{x^2}{\rm d}s=x_1\left(1-\frac{1}{x_2}\right).
\end{eqnarray*}
The same formula follows immediately from the fact that the range of $(m(u))_{u\geq 0}$ form a Poisson point process on $(0,\infty)$ with intensity $x^{-1} {\rm d} x$, see Theorem~1 and Corollary~1 in~\cite{Resnick:1974} (by the way, the same is true for the range of the inverse process $(m^{\leftarrow}(u))_{u\geq 0}$).
Summarizing, we obtain the following result.
\begin{theorem}\label{thm:last_value_and_overshot_conv}
As $t\to\infty$,
$$
\left(\frac{L(S_{\nu(t)-1})}{L(t)},\frac{L(S_{\nu(t)})}{L(t)}\right)\todistr (\mathcal{U},\mathcal{V}),
$$
where $\mathcal{U}$ and $\mathcal{V}$ are independent, $\mathcal{U}$ has uniform distribution on $(0,1)$ and $\mathcal{V}$ has distribution $\mmp\{\mathcal{V}>x\}=\frac{1}{x}$ for $x\geq 1$.
\end{theorem}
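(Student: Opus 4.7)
The plan is essentially to read off the conclusion from the two ingredients that have already been assembled in the text preceding the statement. The first ingredient is the weak convergence
$$
\left(\frac{L(S_{\nu(t)-1})}{L(t)},\frac{L(S_{\nu(t)})}{L(t)}\right)\todistr \left(m(m^{\leftarrow}(1)-),m(m^{\leftarrow}(1))\right),
$$
which is obtained from \eqref{eq:last_value_and_overshot_conv} by specializing $u=1$ and performing the substitution $t\mapsto L(s)$ (equivalently, using the self-similarity relation \eqref{eq:self_similarity} to reduce $u>0$ to $u=1$). The second ingredient is the explicit joint cumulative distribution function
$$
\mmp\{m(m^{\leftarrow}(1)-)\leq x_1,\ m(m^{\leftarrow}(1))\leq x_2\}=x_1\left(1-\frac{1}{x_2}\right),\qquad 0<x_1<1,\ x_2>1,
$$
which was derived immediately above the theorem by decomposing on the value of $m^{\leftarrow}(1)$ and using the Poisson avoidance formula for $\mathcal{P}$.

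Given these, the proof is purely an identification step. Since the joint CDF factorises as $x_1\cdot(1-1/x_2)$, the continuous mapping theorem (or rather, the definition of independence via the product form of the joint CDF) yields that the two coordinates of the limiting vector are independent. Letting $x_2\to\infty$ identifies the first marginal as $\mmp\{\mathcal{U}\leq x_1\}=x_1$ on $(0,1)$, i.e.\ the uniform law on $(0,1)$. Letting $x_1\uparrow 1$ identifies the second marginal as $\mmp\{\mathcal{V}\leq x_2\}=1-1/x_2$ on $(1,\infty)$, i.e.\ $\mmp\{\mathcal{V}>x\}=1/x$ for $x\geq 1$. Combining these observations with the first-displayed convergence above gives the claimed weak convergence to $(\mathcal{U},\mathcal{V})$.

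There is no genuine obstacle here: the two nontrivial inputs (the $M_1$-functional limit theorem for $\nu$ via Theorem~\ref{flt_rp}, from which \eqref{eq:last_value_and_overshot_conv} is obtained by the continuity of the first-passage/last-value/overshoot mapping in the $M_1$-topology, and the Poisson computation yielding the joint CDF) have already been carried out in the preceding exposition. The proof itself therefore consists of writing out the above reduction in one or two sentences and pointing to the factorisation $x_1(1-1/x_2)=x_1\cdot(1-1/x_2)$ as the source of both independence and the identification of the marginals.
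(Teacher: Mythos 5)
Your proof is correct and follows essentially the same route as the paper: it reads off the weak convergence from \eqref{eq:last_value_and_overshot_conv} via the substitution $t\mapsto L(t)/u$ (equivalently \eqref{eq:self_similarity}), then identifies the limiting law from the joint CDF computation $x_1(1-1/x_2)$, whose product form gives both the independence and the two marginals. The paper's ``proof'' is exactly this ``summarizing'' step, so there is nothing to add.
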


\begin{example}
Consider a simple symmetric random walk on $\Z^2$ starting at the origin. Denote the times at which the random walk returns to the origin by $\xi_1+\ldots+\xi_k$, $k\in\N$,  so that $\xi_k$ is the length of the $k$-th excursion away from the origin. Then, $\xi_1,\xi_2,\ldots$ are independent identically distributed random variables and it is known~\cite{dvoretzky_erdoes:1951} that
$\mmp\{\xi_1>n\} \sim \pi/\log n$ as $n\to\infty$. Denoting by $\nu(u)$ the number of visits to the origin up to time $u>0$, we obtain from Theorem~\ref{flt_rp} that
$$
t^{-1} \nu(\exp (t \cdot )) \todistrm \pi^{-1}m^{\leftarrow}(\cdot),\quad t\to\infty,
$$
on $D[0,\infty)$, a result which was obtained in~\cite{Kasahara:1986extremal}. Recently, an independent proof has been given in~\cite{nandori_shen:2016}.
\end{example}

\section{Proof of Theorem \ref{thm:main}}\label{sec:proof}
We start with an auxiliary lemma.
\begin{lemma}\label{uniformity_in_rv}
Let $L:[0,\infty)\to[0,\infty)$ be a strictly increasing continuous function slowly varying at $+\infty$ such that $L(0)=0$, $\lim_{x\to+\infty}L(x)=+\infty$ and let $h:[0,\infty)\to[0,\infty)$ be a function such that
$h\circ L^{\leftarrow}$ is regularly varying at $+\infty$ with index $\alpha\in\R$. Then
\begin{itemize}
\item $h$ is slowly varying at $+\infty$;
\item for every $u>0$ and $\varepsilon\in (0,u)$, it holds
\begin{equation}\label{eq:uniformity_in_lemma}
\lim_{t\to+\infty}\sup_{y\in [0,\,u-\varepsilon]}\left|\frac{h(L^{\leftarrow}(tu)-L^{\leftarrow}(ty))}{h(L^{\leftarrow}(t))}-u^{\alpha}\right|=0.
\end{equation}
\end{itemize}
\end{lemma}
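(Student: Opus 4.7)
My plan is to decompose the ratio as
\[
\frac{h(L^{\leftarrow}(tu) - L^{\leftarrow}(ty))}{h(L^{\leftarrow}(t))} = \frac{h(L^{\leftarrow}(tu) - L^{\leftarrow}(ty))}{h(L^{\leftarrow}(tu))} \cdot \frac{h(L^{\leftarrow}(tu))}{h(L^{\leftarrow}(t))}.
\]
The second factor does not depend on $y$ and converges to $u^{\alpha}$ directly from the assumed regular variation of $h\circ L^{\leftarrow}$, so the whole task reduces to proving that the first factor tends to $1$ uniformly in $y\in[0,u-\varepsilon]$.

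The first bullet (slow variation of $h$) is a prerequisite for applying the uniform convergence theorem to $h$ later, and I would verify it by writing $h = (h\circ L^{\leftarrow})\circ L$: since $L$ is slowly varying, $L(cy)/L(y)\to 1$ for every $c>0$, and the standard fact that a regularly varying function $f$ of index $\alpha$ satisfies $f(a_n)/f(b_n)\to\lambda^{\alpha}$ whenever $a_n/b_n\to\lambda\in(0,\infty)$ (taking $a_n=L(cy_n)$, $b_n=L(y_n)$) yields $h(cy)/h(y)\to 1$.

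The key ingredient for the uniform part is the \emph{rapid} variation of $L^{\leftarrow}$: for every $\lambda\in(0,1)$ one has $L^{\leftarrow}(\lambda t)/L^{\leftarrow}(t)\to 0$. I would prove this by contradiction, assuming a subsequential positive limit, pushing the equality $L(L^{\leftarrow}(\lambda t))/L(L^{\leftarrow}(t))=\lambda$ through slow variation of $L$ to force $\lambda=1$. Combined with the monotonicity of $L^{\leftarrow}$, this gives the uniform estimate
\[
\sup_{y\in[0,u-\varepsilon]} \frac{L^{\leftarrow}(ty)}{L^{\leftarrow}(tu)} \le \frac{L^{\leftarrow}(t(u-\varepsilon))}{L^{\leftarrow}(tu)} \longrightarrow 0, \quad t\to\infty.
\]
Writing $L^{\leftarrow}(tu) - L^{\leftarrow}(ty) = \lambda_t(y)\, L^{\leftarrow}(tu)$ with $\lambda_t(y):=1-L^{\leftarrow}(ty)/L^{\leftarrow}(tu)$, the above bound forces $\lambda_t(y)\in[1/2,1]$ uniformly in $y\in[0,u-\varepsilon]$ for all sufficiently large $t$. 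Karamata's uniform convergence theorem for the slowly varying function $h$ on the compact set $[1/2,1]\subset(0,\infty)$ then yields
\[
\sup_{y\in[0,u-\varepsilon]} \left| \frac{h(\lambda_t(y) L^{\leftarrow}(tu))}{h(L^{\leftarrow}(tu))} - 1 \right| \le \sup_{\lambda\in[1/2,1]} \left| \frac{h(\lambda L^{\leftarrow}(tu))}{h(L^{\leftarrow}(tu))} - 1 \right| \longrightarrow 0,
\]
as $L^{\leftarrow}(tu)\to\infty$, completing the argument.

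The main obstacle, in my view, is not conceptual but organizational: one has to carefully move between slow variation of $L$, rapid variation of $L^{\leftarrow}$, regular variation of $h\circ L^{\leftarrow}$, and slow variation of $h$, and invoke the uniform convergence theorem on the right object ($h$ itself, at the argument $L^{\leftarrow}(tu)$ rather than $t$). The only genuinely nontrivial step is the transfer of variation properties across the inversion $L\mapsto L^{\leftarrow}$, and this is classical.
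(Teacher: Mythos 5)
Your proposal is correct and follows essentially the same route as the paper: both establish slow variation of $h$ via the identity $h = (h\circ L^{\leftarrow})\circ L$, both establish the rapid-variation property $L^{\leftarrow}(ta)/L^{\leftarrow}(tb)\to 0$ for $0<a<b$, both reduce to the uniform estimate $\lambda_t(y):=\bigl(L^{\leftarrow}(tu)-L^{\leftarrow}(ty)\bigr)/L^{\leftarrow}(tu)\to 1$ uniformly on $[0,u-\varepsilon]$, and both finish by invoking Karamata's uniform convergence theorem for the slowly varying $h$ at the argument $L^{\leftarrow}(tu)$. The only difference is cosmetic: you split the target ratio as a product of two factors, whereas the paper bounds the error by a triangle inequality; the underlying two ingredients (the non-uniform convergence of $h(L^{\leftarrow}(tu))/h(L^{\leftarrow}(t))\to u^{\alpha}$ and the uniform convergence of the first factor to $1$) are identical.
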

\begin{proof}
Slow variation of $h$ follows immediately from the regular variation of $h\circ L^{\leftarrow}$, slow variation of $L$, and the equality
$$
\frac{h(ut)}{h(t)}=\frac{h(L^{\leftarrow}(L(tu)))}{h(L^{\leftarrow}(L(t)))},\quad u>0.
$$
To prove \eqref{eq:uniformity_in_lemma} we argue as follows. Since $L$ is slowly varying and increasing, we have, for $0<a<b$,
\begin{equation}\label{eq:l_inverse_fastly_vaying}
\lim_{t\to+\infty}\frac{L^{\leftarrow}(ta)}{L^{\leftarrow}(tb)}=0.
\end{equation}
From \eqref{eq:l_inverse_fastly_vaying} we immediately obtain,
\begin{equation}\label{eq:l_uniform1}
\lim_{t\to+\infty}\sup_{y\in[0,\,u-\varepsilon]}\left|\frac{L^{\leftarrow}(tu)-L^{\leftarrow}(ty)}{L^{\leftarrow}(tu)}-1\right|=0,\quad u>0,\quad \varepsilon\in(0,u)
\end{equation}
By the triangle inequality,
\begin{eqnarray*}
\left|\frac{h(L^{\leftarrow}(tu)-L^{\leftarrow}(ty))}{h(L^{\leftarrow}(t))}-u^{\alpha}\right|&\leq&\frac{h(L^{\leftarrow}(tu)-L^{\leftarrow}(ty))}{h(L^{\leftarrow}(tu))}\left|\frac{h(L^{\leftarrow}(tu))}{h(L^{\leftarrow}(t))}-u^{\alpha}\right|\\
&+&u^{\alpha}\left|\frac{h(L^{\leftarrow}(tu)-L^{\leftarrow}(ty))}{h(L^{\leftarrow}(tu))}-1\right|.
\end{eqnarray*}
It remains to show that
\begin{equation}\label{eq:L_uniform2}
\lim_{t\to\infty}\sup_{y\in [0,u-\varepsilon]}\left|\frac{h(L^{\leftarrow}(tu)-L^{\leftarrow}(ty))}{h(L^{\leftarrow}(tu))}-1\right|=0
\end{equation}
By the uniform convergence theorem for slowly varying functions, see Theorem 1.2.1 in \cite{Bingham+Goldie+Teugels:1987}, for every $\delta>0$ there exists $s_0>0$ such that,
\begin{equation}\label{eq:L_uniform3}
1-\delta\leq \frac{h(\lambda s)}{h(s)}\leq 1+\delta,
\end{equation}
for all $s\geq s_0$ and all $\lambda\in [1/2,2]$. From \eqref{eq:l_uniform1} it follows that there exists $t_1>0$ such that 
$$
\frac{1}{2}\leq \frac{L^{\leftarrow}(tu)-L^{\leftarrow}(ty)}{L^{\leftarrow}(tu)} \leq 2\quad\text{and}\quad L^{\leftarrow}(tu)\geq s_0,
$$
for all $t\geq t_1$ and $y\in [0,u-\varepsilon]$. Combining pieces together, we see that \eqref{eq:L_uniform2} follows from \eqref{eq:L_uniform3} with $s:= L^{\leftarrow}(tu)$ and $\lambda:=\frac{L^{\leftarrow}(tu)-L^{\leftarrow}(ty)}{L^{\leftarrow}(tu)}$. The proof of the lemma is complete.
\end{proof}

\begin{proof}[Proof of Theorem \ref{thm:main}] We use the following representation for the renewal shot noise process:
$$
Y(u):=\int_{[0,\,u]}h(u-y){\rm d}\nu(y),\quad u\geq 0.
$$
Using this formula we infer, for $t>0$,
\begin{eqnarray*}
\frac{Y(L^{\leftarrow}(ut))}{th(L^{\leftarrow}(t))}&=&\int_{[0,\,L^{\leftarrow}(ut)]}\frac{h(L^{\leftarrow}(ut)-y)}{th(L^{\leftarrow}(t))}{\rm d}\nu(y),\quad u\geq 0.
\end{eqnarray*}
Recall that we assume strict monotonicity and continuity of $L$ and also that $L(0)=0$. Change of the variable formula gives
\begin{eqnarray*}
\frac{Y(L^{\leftarrow}(ut))}{th(L^{\leftarrow}(t))}&=&\int_{[0,\,u]}\frac{h(L^{\leftarrow}(ut)-L^{\leftarrow}(zt))}{h(L^{\leftarrow}(t))}{\rm d_z}\frac{\nu(L^{\leftarrow}(zt))}{t},
\end{eqnarray*}
where ${\rm d_z}$ denotes the differential with respect to $z$. Fix some $n\in\N$, $0 < u_1<\ldots <u_n$ and $\gamma_1,\ldots,\gamma_n\geq 0$. Fix also $\varepsilon\in(0,u_1)$. According to the Cram\'{e}r-Wold device, it is enough to check that
\begin{equation}\label{eq:cramer_wold}
\sum_{i=1}^{n}\gamma_i\frac{Y(L^{\leftarrow}(u_it))}{th(L^{\leftarrow}(t))}\todistr \sum_{i=1}^{n}\gamma_i u_i^{\alpha}m^{\leftarrow}(u_i),\quad t\to\infty.
\end{equation}
Rewrite the left-hand side of \eqref{eq:cramer_wold} as follows:
\begin{eqnarray*}
&&\hspace{-1.5cm}\sum_{i=1}^{n}\gamma_i\frac{Y(L^{\leftarrow}(u_it))}{th(L^{\leftarrow}(t))}\\
&=&\int_{[0,\infty)}\left(\sum_{i=1}^{n}\gamma_i\frac{h(L^{\leftarrow}(u_it)-L^{\leftarrow}(zt))}{h(L^{\leftarrow}(t))}\1_{\{0\leq z\leq u_i-\varepsilon\}}\right){\rm d_z}\frac{\nu(L^{\leftarrow}(zt))}{t}\\
&+&\int_{[0,\infty)}\left(\sum_{i=1}^{n}\gamma_i\frac{h(L^{\leftarrow}(u_it)-L^{\leftarrow}(zt))}{h(L^{\leftarrow}(t))}\1_{\{u_i-\varepsilon <z\leq u_i\}}\right){\rm d_z}\frac{\nu(L^{\leftarrow}(zt))}{t}\\
&=&Z_{1,\varepsilon}(t)+Z_{2,\varepsilon}(t).
\end{eqnarray*}
According to formula \eqref{eq:uniformity_in_lemma} in Lemma \ref{uniformity_in_rv},
$$
Z_{1,\varepsilon}(t)=\int_{[0,\infty)}\left(\sum_{i=1}^{n}\gamma_i(u_i^{\alpha}+o(1))\1_{\{0\leq z\leq u_i-\varepsilon\}}\right){\rm d_z}\frac{\nu(L^{\leftarrow}(zt))}{t},
$$
where the term $o(1)$ does not depend on $z\in [0, u_i-\varepsilon]$ and tends to zero as $t\to\infty$, whence
$$
Z_{1,\varepsilon}(t)=\sum_{i=1}^{n}\gamma_i(u_i^{\alpha}+o(1))\frac{\nu(L^{\leftarrow}((u_i-\varepsilon)t))}{t}.
$$
From Theorem \ref{flt_rp} we obtain
$$
Z_{1,\varepsilon}(t)\todistr\sum_{i=1}^{n}\gamma_i u_i^{\alpha}m^{\leftarrow}(u_i-\varepsilon)=:Z_{1,\varepsilon}.
$$
Obviously, $m^{\leftarrow}(u_i-\varepsilon)\uparrow m^{\leftarrow}(u_i-)$ almost surely, as $\varepsilon\to 0+$ for all $i=1,\ldots,n$. Therefore, almost surely,
$$
Z_{1,\varepsilon}\to \sum_{i=1}^{n}\gamma_i u_i^{\alpha}m^{\leftarrow}(u_i-)=\sum_{i=1}^{n}\gamma_i u_i^{\alpha}m^{\leftarrow}(u_i),\quad \varepsilon\to 0+,
$$
where the second equality follows from the almost sure continuity of $(m^{\leftarrow}(u))_{u\geq 0}$ at arbitrary fixed $u\geq 0$, see Proposition 4.7 in \cite{Resnick:2008}\footnote{Since $(m^{\leftarrow}(u))_{u\geq 0}$ is nondecreasing, stochastic continuity at fixed $u\geq 0$ implies almost sure continuity at $u$.}. According to Theorem 4.2 in \cite{billingsley_book} it remains to show
\begin{equation}\label{eq:billingsley}
\lim_{\varepsilon\to 0+}\limsup_{t\to\infty}\mmp\{Z_{2,\varepsilon}(t)>\delta\}=0,
\end{equation}
for every fixed $\delta>0$. Note that \eqref{eq:billingsley} follows once we can show that
\begin{equation*}
\lim_{\varepsilon\to 0+}\limsup_{t\to\infty}\mmp\{\nu(L^{\leftarrow}(ut))-\nu(L^{\leftarrow}((u-\varepsilon)t))>0\}=0,
\end{equation*}
for every fixed $u>0$. We have
\begin{eqnarray*}
\mmp\{\nu(L^{\leftarrow}(ut))-\nu(L^{\leftarrow}((u-\varepsilon)t))>0\}&=&\mmp\{\nu(L^{\leftarrow}(ut))-\nu(L^{\leftarrow}((u-\varepsilon)t))\geq 1\}\\
&=&\mmp\{S_{\nu(L^{\leftarrow}((u-\varepsilon)t))}\leq L^{\leftarrow}(ut)\}\\
&=&\mmp\{L(S_{\nu(L^{\leftarrow}((u-\varepsilon)t))})/t\leq u\}\\
&\to&\mmp\{m(m^{\leftarrow}(u-\varepsilon))\leq u\},\quad t\to\infty
\end{eqnarray*}
by formula \eqref{eq:last_value_and_overshot_conv}. Finally,
$$
\mmp\{m(m^{\leftarrow}(u-\varepsilon))\leq u\}\overset{\eqref{eq:self_similarity}}{=}\mmp\{m(m^{\leftarrow}(1))\leq u/(u-\varepsilon)\}=\frac{\varepsilon}{u},
$$
since $m(m^{\leftarrow}(1))$ has a Pareto distribution, see Theorem \ref{thm:last_value_and_overshot_conv}. The last expression tends to zero, as $\varepsilon\to 0+$. This completes the proof of Theorem \ref{thm:main}.
\end{proof}

\section{Appendix}
The lemma below shows that without loss of generality the slowly varying function $L$ in \eqref{xi_slow_variation} can be chosen continuous and {\it strictly} increasing on $[0,\infty)$. Analogous statements regarding the existence of a {\it nondecreasing} asymptotically equivalent function are well-known, see p.~17 in \cite{Seneta:1976} or Proposition 1.3.4 and Corollary 1.3.5 in \cite{Bingham+Goldie+Teugels:1987}.
\begin{lemma}\label{L_increasing_continuous}
Let $L:[0,\infty)\to [0,\infty)$ be a nondecreasing function slowly varying at $+\infty$ with $\lim_{x\to\infty}L(x)=+\infty$. Then there exists a strictly increasing and continuous function $L_1:[0,\infty)\to [0,\infty)$ such that $L_1(0)=0$ and $L(x)\sim L_1(x)$, $x\to\infty$.
\end{lemma}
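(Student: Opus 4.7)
The strategy is two-step: first construct a continuous nondecreasing function $L_0$ asymptotically equivalent to $L$ with $L_0(0)=0$, and then upgrade $L_0$ to a strictly increasing function by adding a bounded, strictly increasing continuous perturbation that vanishes at the origin.

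For the first step, I would note that since $L$ is nondecreasing and $L(x)\to+\infty$, there exists a smallest integer $n_0$ with $L(n_0)>0$. Define $L_0$ on $[0,n_0]$ as the linear ramp $x\mapsto (x/n_0)L(n_0)$, and on each subsequent interval $[n,n+1]$, $n\geq n_0$, as the linear interpolation between the values $L(n)$ and $L(n+1)$. The resulting $L_0:[0,\infty)\to[0,\infty)$ is then continuous, nondecreasing, satisfies $L_0(0)=0$, and tends to infinity.

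To show $L_0\sim L$, I would fix $x\geq n_0$, write $n=\lfloor x\rfloor$, and observe that both $L(x)$ and $L_0(x)$ lie in the interval $[L(n),L(n+1)]$; consequently the ratio $L_0(x)/L(x)$ is squeezed between $L(n)/L(n+1)$ and $L(n+1)/L(n)$. The key input is that $L(n+1)/L(n)\to 1$, which follows from slow variation of $L$ applied to $\lambda_n=(n+1)/n\to 1$, combined with the uniform convergence theorem on the interval $\lambda\in[1,2]$ (Theorem 1.2.1 in \cite{Bingham+Goldie+Teugels:1987}). The squeeze then yields $L_0(x)/L(x)\to 1$.

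Finally, I would set $L_1(x):=L_0(x)+\arctan x$. This function is continuous on $[0,\infty)$ with values in $[0,\infty)$, satisfies $L_1(0)=0$, and is strictly increasing as the sum of a nondecreasing function and the strictly increasing $\arctan$. The asymptotic equivalence $L_1\sim L$ is immediate since $\arctan$ is bounded while $L(x)\to+\infty$, giving $L_1(x)/L(x)=L_0(x)/L(x)+\arctan(x)/L(x)\to 1$. There is no substantive obstacle; the only mildly delicate point is to meet the joint requirement of $L_1(0)=0$ and strict monotonicity starting from the origin, which the additive $\arctan$ term handles cleanly because $\arctan(0)=0$ and $\arctan'>0$ throughout $[0,\infty)$.
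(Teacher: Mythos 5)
Your proof is correct and takes a genuinely different route from the paper's. The paper invokes Karamata's representation theorem (writing $L(x)=c(x)\exp\{\int_0^x\varepsilon(u)u^{-1}\,\dd u\}$ with $\varepsilon\geq 0$) and then surgically replaces $\varepsilon$ by a strictly positive $\varepsilon_1$ so that the resulting integral is strictly increasing; the perturbation of $\varepsilon$ is arranged to be $\dd u/u$-integrable so that the new function differs from $L$ only by a multiplicative constant, which is then removed. You instead proceed by an elementary two-step construction: piecewise-linear interpolation at integer nodes gives continuity with $L_0(0)=0$, and asymptotic equivalence $L_0\sim L$ follows from the squeeze $L(n)/L(n+1)\leq L_0(x)/L(x)\leq L(n+1)/L(n)$ together with the uniform convergence theorem to handle the variable ratio $(n+1)/n\to 1$; then adding the bounded strictly increasing perturbation $\arctan x$ (which vanishes at $0$ and is absorbed asymptotically because $L\to\infty$) upgrades nondecreasing to strictly increasing. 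Your approach is more elementary and self-contained in that it avoids the representation theorem entirely, at the cost of a slightly longer argument; the paper's is shorter once the representation theorem is granted, and it produces an $L_1$ that is moreover smooth away from $0$, though the lemma does not require this. One small gap to patch: if $L(0)>0$ then your $n_0=0$ and the ramp on $[0,n_0]$ degenerates; simply replace $n_0$ by $\max(n_0,1)$ in the definition of the ramp, since the identification $L_0(n)=L(n)$ is only needed for large $n$.
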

\begin{proof}
Using the representation theorem for slowly varying functions \cite[Theorem 1.3.1]{Bingham+Goldie+Teugels:1987} together with Corollary 1.3.5 in the same reference we have
$$
L(x)=c(x)\exp\left\{\int_{0}^x \varepsilon(u)u^{-1}{\rm d}u\right\},\quad x\geq 0,
$$
for some measurable function $c(\cdot)$ such that $c(x)\to c\in(0,\infty)$, as $x\to\infty$, and a nonnegative function $\varepsilon(\cdot)$ such that $\varepsilon(x)\to 0$, as $x\to\infty$. Define the function $\varepsilon_1$ as follows:
$$
\varepsilon_1(u):=\begin{cases}
        u, &   \text{if} \ u\leq 1,   \\
        \varepsilon(u), & \text{if} \ u>1\text{ and } \varepsilon(u)>0,\\
				1/u, & \text{if} \ u>1\text{ and } \varepsilon(u)=0.
\end{cases}
$$
Clearly, $b:=\int_0^{\infty}(\varepsilon(u)-\varepsilon_1(u))u^{-1}{\rm d}u$ is a finite constant. Set
$$
L_1(x):=ce^b\left(\exp\left\{\int_{0}^x \varepsilon_1(u)u^{-1}{\rm d}u\right\}-1\right),\quad x\geq 0.
$$
Since $\varepsilon_1$ is positive, $L_1$ is strictly increasing. The continuity of $L_1$, the relation $L_1(x)\sim L(x)$, $x\to\infty$, and the equality $L_1(0)=0$ are trivial. The proof is complete.
\end{proof}

\subsection*{Acknowledgements}
The work of A.~Marynych was supported by the Alexander von Humboldt Foundation.

\bibliographystyle{plain}
\bibliography{bibliography}
\end{document}